\title{A Note on Tamely Ramified Class Field Theory of Two Dimensional Local Rings}
\author{Shinji Ishida\thanks{Email address: nanacanji@gmail.com}}
\providecommand{\keywords}[1]{{\textit{Keywords:}} #1}
\newtheorem{defi}{Definition}[section]
\newtheorem{thm}{Theorem}[section]
\newtheorem{prop}{Proposition}[section]
\begin{document}
\maketitle

\begin{abstract}
In this note, we treat two dimensional complete local rings which are the completion of $\mathcal{O}_{K}[x, y] = \mathcal{O}_{K}[X, Y]/(XY-\pi)$ at a maximal ideal $(x, y)$, where $\mathcal{O}_{K}$ is an integer ring of a finite extension field $K$ of $\mathbb{Q}_{p}$ or $\mathbb{F}_{p}((t))$ which contains $\pi^{1/m}$, and $\pi=p$ if $\mathbb{Q}_{p}\subset \mathcal{O}_{K}$, $\pi=t$ if $\mathbb{F}_{p}((t))\subset \mathcal{O}_{K}$, and we assume that  the order of the group of root of unity of $\mathcal{O}_{K}$ is $m$ and $m$ is not divisible by $p$. We discuss the tamely ramified class field theory of the fractional field $K_{R}$ of R. This class field theory allows $x$ and $y$ to ramify tamely and it is unramified at height 1 prime ideals of $R$ other than $x$ and $y$.
\end{abstract}
\keywords\footnote[1]{The subject classification codes by 2020 Mathematics Subject Classification is primary-19F05.}{Generalized class field theory (K-theoretic aspects) }



\section{Introduction}
We observe tamely ramified class field theory of two dimensional complete local rings which is a completion of $\mathcal{O}_{K}[x, y] = \mathcal{O}_{K}[X, Y]/(XY-\pi)$ at a maximal ideal $(x, y)$, where $\mathcal{O}_{K}$ is an integer ring of a finite extension field $K$ of $\mathbb{Q}_{p}$ or $\mathbb{F}_{p}((t))$ which contains $\pi^{1/m}$, and $\pi=p$ if $\mathbb{Q}_{p}\subset \mathcal{O}_{K}$, $\pi=t$ if $\mathbb{F}_{p}((t))\subset \mathcal{O}_{K}$. Further we assume that  the order of the group of root of unity of $\mathcal{O}_{K}$ is $m$.\\

In \cite{Saito}, Professor S. Saito completed the unramified class field theory of two dimensional local rings, and it is essential. In this note, our tamely ramified class field theory allows $x$ and $y$ to ramify tamely (the ramification degrees are prime to p) and it is unramified at height 1 prime ideals of $R$ other than $x$ and $y$. Since the unramified part was already completed in \cite{Saito}, we discuss the tame ramification and this was a theme of the author's master thesis 25 years ago. His academic supervisor shared a great idea "Logarithmic Chow Group" around 1997 with him for the author's master thesis. Approximately 25 years have past after that because of the author's procrastination. The author would like to put the great idea out into the world to express his heartflet thanks to his supervisor.\\


\section{Definition of $SK_{1}^{log}(R)$ and the fundamental property}
Let $R$ be a two dimensional local ring which is a completion of $\mathcal{O}_{K}[x, y] = \mathcal{O}_{K}[X, Y]/(XY-\pi)$ at a maximal ideal $(x, y)$, where $\mathcal{O}_{K}$ is an integer ring of a finite extension field $K$ of $\mathbb{Q}_{p}$ or $\mathbb{F}_{p}((t))$ which contains $\pi^{1/m}$, and $\pi=p$ if $\mathbb{Q}_{p}\subset \mathcal{O}_{K}$, $\pi=t$ if $\mathbb{F}_{p}((t))\subset \mathcal{O}_{K}$. Further we assume that the order of the group of root of unity of $\mathcal{O}_{K}$ is $m$ and it is not divisible by $p$. Let $K_{R}$ be the fractional field of $R$. Then the definition of $SK_{1}^{log}(R)$ as as follow:\\

\begin{defi}[$SK_{1}^{log}$]
$SK_{1}^{log}(R)$ is the cokernal of

\begin{equation}
K_{2}^{M}K_{R} \to (\bigoplus_{\mathfrak{p}\in P} k(\mathfrak{p})^{\times}) \bigoplus (K_{2}^{M}\hat{K}_{R,x}/U^{1}K_{2}^{M}\hat{K}_{R,x}) \bigoplus (K_{2}^{M}\hat{K}_{R,y}/U^{1}K_{2}^{M}\hat{K}_{R,y})
\end{equation}
\end{defi}
where $P$ is a set of all prime ideals of height 1 in $R$ other than $(x)$ and $(y)$, $k(\mathfrak{p})$ is the residue field of $R$ at $\mathfrak{p}$, $K_{2}^{M}$ means Milnor K-group of field, $U^{1}K_{2}^{M}K=\{1+\mathfrak{m}_{K}\mathcal{O}_{K},*\}$ for a local filed $K$ ($\mathfrak{m}_{K}$ is the maximal ideal of an integer ring $\mathcal{O}_{K}$ of $K$), and $\hat{K}_{R,x}$ (resp.$\hat{K}_{R,y}$) is fractional field of a completion of $R$ at $(x)$ (resp.at $(y)$).

For $\mathfrak{p}\in P$, $K_{2}^{M}K_{R} \to k(\mathfrak{p})^{\times}$ is the usual tame symbol:
\begin{equation}
\{f,g\}\to (-1)^{ord_{\mathfrak{p}}(f)ord_{\mathfrak{p}}(g)}f^{ord_{\mathfrak{p}}(g)}g^{-ord_{\mathfrak{p}}(f)}|_{\mathfrak{p}}.
\end{equation}

For $x$, $y$, the following homomorphism is induced from the natural inclusion $K_{R}\subset \hat{K}_{R,x}$, $\hat{K}_{R,y}$:
\begin{equation}
K_{2}^{M}K_{R} \to (K_{2}^{M}\hat{K}_{R,x}/U^{1}K_{2}^{M}\hat{K}_{R,x}) \bigoplus (K_{2}^{M}\hat{K}_{R,y}/U^{1}K_{2}^{M}\hat{K}_{R,y}).
\end{equation}

The fundamental property of $SK_{1}^{log}(R)$ is as follow:

\begin{prop}
Let $F$ be the finite residue field of $R$.  Then the kernel $Ker(\partial)$ of the following natural surjective $\partial$ has the following group structure:

\begin{equation}
SK_{1}^{log}(R)  \xrightarrow{\partial}  SK_{1}(R).
\end{equation}
\begin{equation}
0 \longrightarrow D \longrightarrow Ker(\partial) \longrightarrow F^{\times}  \longrightarrow 1
\end{equation}
where $D$ is a divisible group.
\end{prop}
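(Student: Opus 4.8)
The plan is to compare the two cokernel presentations directly and read off $\mathrm{Ker}(\partial)$ by a diagram chase. Write $A^{\log}$ for the target of the map defining $SK_1^{\log}(R)$ and $A$ for the analogous direct sum $\bigl(\bigoplus_{\mathfrak p\in P}k(\mathfrak p)^\times\bigr)\oplus k(x)^\times\oplus k(y)^\times$ whose cokernel (following \cite{Saito}) is $SK_1(R)$, so that there is a surjection $q\colon A^{\log}\twoheadrightarrow A$ which is the identity on the $P$-part and which is the tame boundary maps $b_x\colon K_2^M\hat K_{R,x}/U^1K_2^M\hat K_{R,x}\to k(x)^\times$, $b_y$ on the two remaining factors. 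Since each $b_x,b_y$ is surjective, $q$ is surjective and hence so is the induced map $\partial$, which gives the surjectivity asserted in the statement. Each $b_x$ sits in the fundamental exact sequence $0\to K_2^Mk(x)\to K_2^M\hat K_{R,x}/U^1K_2^M\hat K_{R,x}\xrightarrow{b_x}k(x)^\times\to 0$ (and likewise for $y$), so $\mathrm{Ker}(q)=K_2^Mk(x)\oplus K_2^Mk(y)$. A chase through the commuting square of presentations then identifies $\mathrm{Ker}(\partial)$ with the image of $\mathrm{Ker}(q)$ in $SK_1^{\log}(R)$, i.e.
\[
\mathrm{Ker}(\partial)\;\cong\;\bigl(K_2^Mk(x)\oplus K_2^Mk(y)\bigr)\big/\Lambda,
\]
where $\Lambda$ is the subgroup of those classes that lift to an element of $K_2^MK_R$.

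Next I would feed in the structure theory of $K_2$ of the residue fields. Each of $k(x)$, $k(y)$ is a local field whose residue field is the finite field $F$ and whose group of roots of unity is $F^\times$ (here $p\nmid m$ forces $\mu=\mu_{q-1}=F^\times$, so $|F^\times|=m$). By Moore's theorem the tame symbol $t_x\colon K_2^Mk(x)\to F^\times$ is surjective with divisible kernel, and similarly for $t_y$; thus $K_2^Mk(x)\oplus K_2^Mk(y)\cong (F^\times\oplus F^\times)\oplus(D_x\oplus D_y)$ with $D_x,D_y$ divisible and the two tame symbols realizing the $F^\times\oplus F^\times$ quotient. Projecting, $\mathrm{Ker}(\partial)$ surjects onto $(F^\times\oplus F^\times)/\bar\Lambda$ with kernel a quotient of $D_x\oplus D_y$, hence divisible; call it $D$. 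It therefore remains to prove that $\bar\Lambda$, the image of the relation subgroup in $F^\times\oplus F^\times$, is exactly the anti-diagonal $\{(\zeta,\zeta^{-1}):\zeta\in F^\times\}$, for then $(F^\times\oplus F^\times)/\bar\Lambda\cong F^\times$ and the exact sequence $0\to D\to\mathrm{Ker}(\partial)\to F^\times\to 1$ drops out.

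The inclusion $\bar\Lambda\subseteq$ anti-diagonal I would obtain from the two-dimensional reciprocity law (the Parshin--Kato sum-of-residues theorem) on $\mathrm{Spec}\,R$: for any $\xi\in K_2^MK_R$ whose class lies in $\mathrm{Ker}(q)$ the contributions at the height-one primes in $P$ vanish by construction, so the total iterated residue into $F^\times$ collapses to the single relation $t_x(\xi_x)\,t_y(\xi_y)=1$. The reverse inclusion is where the real work lies and is the step I expect to be the main obstacle. No single Steinberg symbol $\{\zeta,f\}$ can lie in $\mathrm{Ker}(q)$ while having nontrivial tame residue: an $f$ that is a unit along $x$ but a uniformizer of $k(x)$ is forced to be a uniformizer along $y$, producing a nonzero boundary $b_y$. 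I would therefore combine the symbols $\{\zeta,x\}$ and $\{\zeta,y\}$ for $\zeta\in\mu_m\subset\mathcal O_K^\times$ and exploit the relation $xy=\pi=(\pi^{1/m})^m$ together with $p\nmid m$: using the identity $\{\zeta,\pi\}=m\{\zeta,\pi^{1/m}\}$ and the fact that $F^\times$ has exponent $m$, the offending ``constant'' boundaries can be traded for classes that are divisible in $SK_1^{\log}(R)$, leaving behind precisely an anti-diagonal element $(\zeta,\zeta^{-1})$ modulo $D$. Running this for a generator $\zeta$ of $\mu_m=F^\times$ shows $\bar\Lambda$ contains, hence equals, the whole anti-diagonal. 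Assembling the three inputs -- the diagram chase of the first paragraph, Moore's theorem, and the reciprocity computation -- yields the extension $0\to D\to\mathrm{Ker}(\partial)\to F^\times\to 1$ with $D$ divisible, as claimed.
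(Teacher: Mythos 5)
Your proposal follows essentially the same route as the paper: surjectivity of $\partial$ from the tame-symbol filtration $U^1\subset U^0$, identification of $\mathrm{Ker}(\partial)$ with the image of $K_2^Mk(x)\oplus K_2^Mk(y)$ in $SK_1^{\log}(R)$, Moore's theorem to split off the divisible part, and symbols $\{\zeta,x\}$, $\{\zeta,y\}$ with $\zeta\in\mu_m$ together with $xy=(\pi^{1/m})^m$ to cut the two copies of $F^\times$ down to one. If anything you are more explicit than the paper, which only exhibits the (anti-)diagonal relation coming from constants and does not separately argue, as you do via reciprocity, that no further relations occur.
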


\begin{proof}
First, for $x$ (resp. $y$), the kernel $U^{0}K_{2}^{M}\hat{K}_{R,x}$ of the tame symbol $K_{2}^{M}\hat{K}_{R,x}\to$ $k(x)^{\times}$ contains $U^{1}K_{2}^{M}\hat{K}_{R,x}$. Therefore this induces the surjectivity of $\partial$. Furthermore, it is well known that $U^{0}K_{2}^{M}\hat{K}_{R,*}/U^{1}K_{2}^{M}\hat{K}_{R,*}\simeq K_{2}^{M}k(*)$ for $*=$ $x$ and $y$, where $k(x)=F((x))$ and $k(y)=F((y))$. Therefore, we have the following exact sequence:
\[
\begin{CD}
K_{2}^{M}k(x)\oplus K_{2}^{M}k(y) @>{d}>>  SK_{1}^{log}(R)  @>>>  SK_{1}(R)  @>>>  0. \\
\end{CD}
\]
We investigate the image of the homomorphism $d$.\\

For $\{u,x\}\in K_{2}^{M}K_{R}$ and $u\in K^{\times}$, the image of (2.3) in is $\{u|_{\pi},x\}\oplus u|_{\pi}\in K_{2}^{M}k(x)\oplus k(y)^{\times}$. Similarly, for $\{v,y\}\in K_{2}^{M}K_{R}$ and $v\in K^{\times}$, the image of (2.3) in is $\{v|_{\pi},x\}\oplus v|_{\pi}\in K_{2}^{M}k(y)\oplus k(x)^{\times}$. Note that $K_{2}^{M}k(x)$ (resp. $K_{2}^{M}k(y)$) is a sum of $F^{\times}$ and a divisible group. This indicates that the image of $\{u,xy\}\in K_{2}^{M}K_{R}$ ($u\in K^{\times}$) via (2.3) is $F^{\times}$ and it is the diagonal image of $F^{\times}$ via $d$. Therefore, we have the following group structure:
\begin{equation}
0 \longrightarrow D \longrightarrow Ker(\partial) \longrightarrow F^{\times}  \longrightarrow 1
\end{equation}
where $D$ is a divisible group, and note that the quotient of the divisible group is also a divisible group.
\end{proof}


\section{Tame abelian Galois Group and Reciprocity Map}
We recall reciprocity map of tamely ramified extension.\\
Let $K$ be a discrete valuation field with a uniformiser $t$ and the residue field $F$, $\mu_{n}$ be the group of root of unity of $K$. We assume that $n$ is prime to characteristics ch($K$) and ch($F$) of $K$ and $F$. Then we have the canonical isomorphism between Galois Group of tamely ramified Galois extension $K(t^{\frac{1}{n}})/K$ and $\mu_{n}$:\\

$$
\begin{array}{ccc}
Gal(K(t^{\frac{1}{n}})/K) & \stackrel{f}{\longrightarrow} &  \mu_{n} \\
\rotatebox{90}{$\in$} & & \rotatebox{90}{$\in$} \\
\sigma & \longmapsto & \frac{\sigma(t^{\frac{1}{n}})}{t^{\frac{1}{n}}}
\end{array}
$$

Although an equation $X^{n}=t$ over $K$ has distinct roots $\omega^{r}t^{\frac{1}{n}}$ ($0\leqq r \leqq n-1$ and $\omega$ is a primitive root of unity $\in \mu_{n}$), the isomorphism $f$ is independent of the choice of $\omega^{r}t^{\frac{1}{n}}$. Actually, since $K$ acts $\omega$ trivially, 

\begin{equation}
\frac{\sigma(\omega^{r} t^{\frac{1}{n}})}{\omega^{r} t^{\frac{1}{n}}} = \frac{\omega^{r} \sigma(t^{\frac{1}{n}})}{\omega^{r} t^{\frac{1}{n}}}=\frac{\sigma(t^{\frac{1}{n}})}{t^{\frac{1}{n}}}
\end{equation}
Furthermore, if $\sigma$ and $\tau$ $\in Gal(K(t^{\frac{1}{n}})/K)$,
\begin{equation}
f(\sigma \tau)=\frac{\sigma \tau (t^{\frac{1}{n}})}{t^{\frac{1}{n}}} = \frac{\sigma (\tau (t^{\frac{1}{n}}))}{\tau (t^{\frac{1}{n}})} \frac{\tau (t^{\frac{1}{n}})}{t^{\frac{1}{n}}}=f(\sigma)g(\tau).
\end{equation}

The trivial element of $Gal(K(t^{\frac{1}{n}})/K)$ maps to $1\in \mu_{n}$, therefore, f is an isomorphism. With Kummer Theory, if $K$ is complete under the valuation defined by $t$, we have the following exact sequence:
\[
\begin{CD}
1 @>>>  \mu_{n}  @>>>  Gal(K_{t}^{ab}/K)  @>>>  Gal(F^{ab}/F)  @>>>  1 \\
\end{CD}
\]
where $K_{t}^{ab}$ means the maximal abelian tamely ramified extension field of $K$.\\

Next, let $R$ be a two dimensional local ring which is a completion of $\mathcal{O}_{K}[x, y] = \mathcal{O}_{K}[X, Y]/(XY-\pi)$ at a maximal ideal $(x, y)$, where $\mathcal{O}_{K}$ is an integer ring of a finite extension field $K$ of $\mathbb{Q}_{p}$ or $\mathbb{F}_{p}((t))$ which contains $\pi^{1/m}$, and $\pi=p$ if $\mathbb{Q}_{p}\subset \mathcal{O}_{K}$, $\pi=t$ if $\mathbb{F}_{p}((t))\subset \mathcal{O}_{K}$. Further we assume that the order of the group of root of unity of $\mathcal{O}_{K}$ is $m$. Let $\hat{R_{x}}$ (resp.$\hat{R_{y}}$)  be the completion of $R$ at a height 1 prime ideal $(x)$ (resp.$(y)$) of $R$. Then the fractional field $\hat{K_{x}}$ (resp.$\hat{K_{y}}$) of $\hat{R_{x}}$ (resp.$\hat{R_{y}}$) is a 2 dimensional local field in the sense of \cite{Kato2} and \cite{Kato3}. In this case, we must note that $Gal(\hat{K_{x}}(x^{\frac{1}{n}})/\hat{K_{x}})\simeq Gal(\hat{K_{y}}(y^{\frac{1}{n}})/\hat{K_{y}})$ via $\sigma\mapsto \sigma^{-1}$ because of $xy=p$. Then reciprocity map from $K_{2}^{M}\hat{K_{x}}$ to $Gal(\hat{K_{x}^{ab}}/\hat{K_{x}})$ (in the sense of  \cite{Kato2} and \cite{Kato3}) induces the following tame reciprocity map:\\

\[
  \begin{CD}
     1 @>>>  \mu_{n}  @>>>  Gal(\hat{K_{x,t}^{ab}}/\hat{K_{x}})  @>>>  Gal(k(y)^{ab}/k(y))  @>>>  1 \\
    @.     @AA{tame\ symbol}A  @AA{Reciprocity\ Law}A  @AA{Reciprocity\ Law}A   @. \\
     0 @>>>  K_{2}^{M}k(y) @>>>  K_{2}^{M}\hat{K_{x}}/U^{1}K_{2}^{M}\hat{K_{x}} @>>>  k(y)^{\times} @>>>  0
  \end{CD}
\]
Here please note that $k(y)$ is the residue field of $\hat{K_{x}}$ and it is a field of formal power series of $y$ over $F$. The structure of $K_{2}^{M}k(y)$ is well known, it is a direct sum of $¥mu_{n}$ and a divisible group. This shows that profinite completion of $K_{2}^{M}\hat{K_{x}}/U^{1}K_{2}^{M}$ is isomorphic to $Gal(\hat{K_{x,t}^{ab}}/\hat{K_{x}})$.\\


\section{The Galois group $Gal(K_{R,t}^{ab}/K_{R}$)}
Let $K$ be a finite extension field of $\mathbb{Q}_{p}$ or $\mathbb{F}_{p}((t))$, and $\mathcal{O}_{K}$ be its integer ring. $R$ is a completion of $\mathcal{O}_{K}[x, y] = \mathcal{O}_{K}[X, Y]/(XY-\pi)$ at a maximal ideal $(x, y)$, where $\mathcal{O}_{K}$ is an integer ring of a finite extension field $K$ of $\mathbb{Q}_{p}$ or $\mathbb{F}_{p}((t))$ which contains $\pi^{1/m}$, and $\pi=p$ if $\mathbb{Q}_{p}\subset \mathcal{O}_{K}$, $\pi=t$ if $\mathbb{F}_{p}((t))\subset \mathcal{O}_{K}$. Further we assume that the order of the group of root of unity of $\mathcal{O}_{K}$ is $m$ and it is not divisible by $p$. In this case, since $xy=p$ on the fractional field $K_{R}$ of $R$ and $x^{1/m}y^{1/m}=\pi^{1/m}\in \mathcal{O}_{K}$, $K_{R}(x^{1/m}) =  K_{R}(y^{1/m})$.\\

Note that $K_{R}(x^{1/m}) =  K_{R}(y^{1/m})/K_{R}$ is unramified at all height one prime ideals of $R$ other than $(x)$ and $(y)$. Though the author omits the proof, the readers can understand this from logarithmic structure, see Theorem 3.5 of \cite{Kato1} for more details.\\

Therefore we have the following Theorem.
\begin{thm}
Let $K$ be a finite extension field of $\mathbb{Q}_{p}$ or $\mathbb{F}_{p}((t))$, and $\mathcal{O}_{K}$ be its integer ring. $R$ is a completion of $\mathcal{O}_{K}[x, y] = \mathcal{O}_{K}[X, Y]/(XY-\pi)$ at a maximal ideal $(x, y)$, where $\mathcal{O}_{K}$ is an integer ring of a finite extension field $K$ of $\mathbb{Q}_{p}$ or $\mathbb{F}_{p}((t))$ which contains $\pi^{1/m}$, and $\pi=p$ if $\mathbb{Q}_{p}\subset \mathcal{O}_{K}$, $\pi=t$ if $\mathbb{F}_{p}((t))\subset \mathcal{O}_{K}$. Then there exists an exact sequence on $Gal(K_{R,t}^{ab}/K_{R})$:

\[
\begin{CD}
1 @>>> F^{\times}  @>>>  Gal(K_{R,t}^{ab}/K_{R})  @>>>  \hat{\mathbb{Z}}  @>>>  0. \\
\end{CD}
\]
where $F$ is the residue finite field of $R$ and $K_{R,t}^{ab}/K_{R}$ is the maximal abelian extension field whixch is unramified at all height one prime ideals of $R$ other than $(x)$ and $(y)$ and tamely ramified at $(x)$ and $(y)$.
\end{thm}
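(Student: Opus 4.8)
The plan is to derive the theorem by combining a global tame reciprocity isomorphism with the structural Proposition~2.1. First I would assemble the reciprocity map
$$\rho : SK_{1}^{log}(R) \longrightarrow \mathrm{Gal}(K_{R,t}^{ab}/K_{R})$$
out of the local reciprocity maps of Section~3 at the two ramified primes $(x)$ and $(y)$ together with S.~Saito's unramified reciprocity (\cite{Saito}) at the remaining height one primes, the compatibility along $K_{R}$ being exactly the relation imposed by dividing out the image of $K_{2}^{M}K_{R}$ in the definition of $SK_{1}^{log}(R)$. The first substantial step is to show that $\rho$ induces an isomorphism on profinite completions
$$SK_{1}^{log}(R)^{\wedge} \;\xrightarrow{\ \sim\ }\; \mathrm{Gal}(K_{R,t}^{ab}/K_{R}),$$
the target already being profinite. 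On the two ramified factors this is the vertical isomorphism established in Section~3, namely that the profinite completion of $K_{2}^{M}\hat{K}_{x}/U^{1}K_{2}^{M}\hat{K}_{x}$ is $\mathrm{Gal}(\hat{K_{x,t}^{ab}}/\hat{K}_{x})$ and likewise for $y$; the relation $xy=\pi$, equivalently $K_{R}(x^{1/m})=K_{R}(y^{1/m})$, is precisely what fuses these two contributions into a single inertia rather than two independent ones.

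Next I would read off the two ends of the desired sequence. Composing $\rho$ with restriction to the maximal everywhere unramified abelian subextension is, under reciprocity, the boundary map $\partial : SK_{1}^{log}(R)\to SK_{1}(R)$ of Proposition~2.1, and Saito's unramified class field theory identifies $SK_{1}(R)^{\wedge}$ with $\mathrm{Gal}(\overline{F}/F)\cong\hat{\mathbb{Z}}$, the constant field (Frobenius) direction; this yields the surjection onto $\hat{\mathbb{Z}}$. For the kernel I would invoke Proposition~2.1 directly: $\mathrm{Ker}(\partial)$ sits in $0\to D\to\mathrm{Ker}(\partial)\to F^{\times}\to 1$ with $D$ divisible, and since the profinite completion of a divisible group is trivial, completion kills $D$ and gives $\mathrm{Ker}(\partial)^{\wedge}\cong F^{\times}$. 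Geometrically this $F^{\times}$ is the tame inertia along $(x)$ and $(y)$: the full tame inertia is $\hat{\mathbb{Z}}^{(p')}(1)$ with Frobenius acting by $q$, so its largest quotient carrying a trivial Frobenius action, which is the part contributing to an abelian extension of $K_{R}$, equals $\mu_{q-1}=\mu_{m}=F^{\times}$, and the identification $\sigma\mapsto\sigma^{-1}$ of Section~3 folds the inertia at $(x)$ and at $(y)$ onto the single diagonal copy of $F^{\times}$ seen in Proposition~2.1.

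Assembling these, I would apply profinite completion to the exact sequence $0\to\mathrm{Ker}(\partial)\to SK_{1}^{log}(R)\to SK_{1}(R)\to 0$ and substitute $\mathrm{Ker}(\partial)^{\wedge}=F^{\times}$, $SK_{1}(R)^{\wedge}=\hat{\mathbb{Z}}$ and $SK_{1}^{log}(R)^{\wedge}=\mathrm{Gal}(K_{R,t}^{ab}/K_{R})$ to obtain
$$1\longrightarrow F^{\times}\longrightarrow \mathrm{Gal}(K_{R,t}^{ab}/K_{R})\longrightarrow\hat{\mathbb{Z}}\longrightarrow 0,$$
which is the assertion.

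I expect the main obstacle to be the first step, proving that $\rho$ is an isomorphism after profinite completion: this is the class field theory itself and cannot be obtained from the earlier sections by formal manipulation, since it requires gluing Kato's two dimensional local reciprocity (\cite{Kato2}, \cite{Kato3}) at $(x)$ and $(y)$ to Saito's unramified theory over $\mathrm{Spec}\,R$ and verifying that the product (residue) formula underlying the definition of $SK_{1}^{log}(R)$ matches the local contributions on the Galois side. A secondary point is that profinite completion is only right exact, so I would need the completed sequence to remain short exact on the left; here this is harmless because $\mathrm{Ker}(\partial)^{\wedge}=F^{\times}$ is finite and injects as genuine inertia, while $SK_{1}(R)^{\wedge}=\hat{\mathbb{Z}}$ is torsion free, leaving no $\varprojlim^{1}$ obstruction.
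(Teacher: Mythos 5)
Your structural picture of the answer is right --- one copy of $F^{\times}$ of tame inertia because $xy=\pi$ and $\pi^{1/m}\in K$ force $K_{R}(x^{1/m})=K_{R}(y^{1/m})$, a $\hat{\mathbb{Z}}$ from the unramified (Frobenius) direction, and divisible pieces dying under profinite completion --- but the logical route you take is backwards relative to the paper and leaves the actual content unproved. Your entire argument rests on the ``first step,'' namely that $\rho$ induces an isomorphism $SK_{1}^{log}(R)^{\wedge}\xrightarrow{\sim}Gal(K_{R,t}^{ab}/K_{R})$. That statement is the Main Theorem of Section~5 of the paper, which is proved \emph{after} this theorem and \emph{uses} it: the paper's justification that the completions agree is precisely a comparison of the extension $0\to F^{\times}\to SK_{1}^{log}(R)^{\wedge}\to\hat{\mathbb{Z}}\to 0$ coming from Proposition~2.1 with the extension $1\to F^{\times}\to Gal(K_{R,t}^{ab}/K_{R})\to\hat{\mathbb{Z}}\to 0$ asserted here. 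So deducing the present theorem from the reciprocity isomorphism is circular in the paper's architecture, and you yourself flag that the isomorphism ``cannot be obtained from the earlier sections by formal manipulation'' --- which is exactly the admission that the step is not proved. In general one cannot show a reciprocity map surjects onto, and is injective into, the full tame abelian Galois group without already knowing that group's size and shape; that is what this theorem supplies.

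The paper's (admittedly terse) intended proof is direct and Galois-theoretic, with no reciprocity map at all. By Kummer theory and the tameness hypothesis, every abelian extension of $K_{R}$ that is tamely ramified at $(x)$, $(y)$ and unramified at the other height one primes is contained in the compositum of the maximal everywhere-unramified abelian extension with the extensions $K_{R}(x^{1/n})$, $n\mid m$ (one needs $\mu_{n}\subset K$ for these to be abelian, whence the bound $m$); Theorem~3.5 of the cited Kato paper on logarithmic structures is invoked to show $K_{R}(x^{1/m})/K_{R}$ is unramified away from $(x)$ and $(y)$ and that these Kummer extensions exhaust the tame inertia. The inertia subgroup is then $Gal(K_{R}(x^{1/m})/K_{R})\cong\mu_{m}\cong F^{\times}$ (Teichm\"uller, using $p\nmid m$), and the decisive point --- the one genuinely new observation in the section --- is that the relation $x^{1/m}y^{1/m}=\pi^{1/m}\in\mathcal{O}_{K}$ makes the inertia at $(x)$ and at $(y)$ generate the \emph{same} subextension, so only one copy of $F^{\times}$ appears rather than two. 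The quotient by inertia is the unramified part, which is $\hat{\mathbb{Z}}$ by Saito's unramified class field theory. If you want to repair your write-up, replace the appeal to the reciprocity isomorphism with this Kummer-theoretic determination of the Galois group; your paragraph identifying the Frobenius-coinvariants of the tame character $\hat{\mathbb{Z}}^{(p')}(1)$ with $\mu_{q-1}=F^{\times}$ is a good heuristic for why no larger abelian inertia can occur, and could be upgraded into that argument.
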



\section{The Main Theorem}

\begin{thm}[\bf{Tamely ramified class field theory of two dimensional local ring}]
Let $K$ be a finite extension field of $\mathbb{Q}_{p}$ or $\mathbb{F}_{p}((t))$, and $\mathcal{O}_{K}$ be its integer ring. $R$ is a completion of $\mathcal{O}_{K}[x, y] = \mathcal{O}_{K}[X, Y]/(XY-\pi)$ at a maximal ideal $(x, y)$, where $\mathcal{O}_{K}$ is an integer ring of a finite extension field $K$ of $\mathbb{Q}_{p}$ or $\mathbb{F}_{p}((t))$ which contains $\pi^{1/m}$, and $\pi=p$ if $\mathbb{Q}_{p}\subset \mathcal{O}_{K}$, $\pi=t$ if $\mathbb{F}_{p}((t))\subset \mathcal{O}_{K}$. Then there exists an injective Reciprocity Map $SK_{1}^{log}(R)\longrightarrow Gal(K_{R,t}^{ab}/K_{R})$ and the profinite completion of $SK_{1}^{log}(R)$ is isomorphic to $Gal(K_{R,t}^{ab}/K_{R})$. Here, $K_{R,t}^{ab}$ is the maximal abelian extension field of $K_{R}$ which is tamely ramified at $x$ and $y$, and unramified at all height 1 prime ideals of $R$ other than $x$ and $y$.
\end{thm}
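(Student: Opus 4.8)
The plan is to build the reciprocity map by gluing the local reciprocity data of Section 3 to Saito's unramified theory, and then to compare the two canonical short exact sequences of Proposition 2.1 and Theorem 4.1 under the resulting map. First I would define the map on each summand of the target group in Definition 2.1. For a height one prime $\mathfrak{p}\in P$ the residue field $k(\mathfrak{p})$ is a one dimensional local field, and since $K_{R,t}^{ab}/K_{R}$ is unramified at $\mathfrak{p}$, Saito's local reciprocity at $\mathfrak{p}$ factors through $\mathrm{ord}_{\mathfrak{p}}\colon k(\mathfrak{p})^{\times}\to\mathbb{Z}$ and Frobenius, landing in the unramified part of $\mathrm{Gal}(K_{R,t}^{ab}/K_{R})$. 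For the two distinguished primes I would use the tame reciprocity map $K_{2}^{M}\hat{K}_{x}/U^{1}K_{2}^{M}\hat{K}_{x}\to\mathrm{Gal}(\hat{K}_{x,t}^{ab}/\hat{K}_{x})$ of Section 3, together with its $y$-analogue, composed with the natural maps $\mathrm{Gal}(\hat{K}_{x,t}^{ab}/\hat{K}_{x})\to\mathrm{Gal}(K_{R,t}^{ab}/K_{R})$ induced by the inclusions of local fields. Summing these yields a homomorphism from the whole direct sum of Definition 2.1 to $\mathrm{Gal}(K_{R,t}^{ab}/K_{R})$.

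Second, the crucial point is the reciprocity law: the composite of $K_{2}^{M}K_{R}$ into the direct sum and then into the Galois group must vanish, so that the map descends to $SK_{1}^{log}(R)$. I would check this on generating symbols $\{f,g\}$, reducing to the one dimensional reciprocity laws already invoked for $K$ and for the residue fields $k(x)=F((x))$ and $k(y)=F((y))$. The relation $xy=\pi$ is what makes the tame contributions cohere: the identification $\mathrm{Gal}(\hat{K}_{x}(x^{1/n})/\hat{K}_{x})\simeq\mathrm{Gal}(\hat{K}_{y}(y^{1/n})/\hat{K}_{y})$ via $\sigma\mapsto\sigma^{-1}$ forces the contributions at $(x)$ and $(y)$ of a symbol $\{u,xy\}$ with $u\in K^{\times}$ to cancel, exactly as the $F^{\times}$ computation in Proposition 2.1 predicts.

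Third, I would organise the data into a morphism of short exact sequences:
\[
\begin{CD}
0 @>>> \mathrm{Ker}(\partial) @>>> SK_{1}^{log}(R) @>{\partial}>> SK_{1}(R) @>>> 0 \\
@. @VVV @VV{\mathrm{rec}}V @VVV @. \\
1 @>>> F^{\times} @>>> \mathrm{Gal}(K_{R,t}^{ab}/K_{R}) @>>> \hat{\mathbb{Z}} @>>> 0
\end{CD}
\]
On the right, Saito's unramified class field theory identifies the profinite completion of $SK_{1}(R)$ with $\hat{\mathbb{Z}}$ and makes the right square commute; on the left, the induced map $\mathrm{Ker}(\partial)\to F^{\times}$ kills the divisible subgroup $D$ (a divisible group admits no nonzero homomorphism to a profinite group) and induces an isomorphism $\mathrm{Ker}(\partial)/D\xrightarrow{\sim}F^{\times}$ onto the inertia part, matching the $F^{\times}$ of Proposition 2.1 with the inertia $\mu_{n}\cong F^{\times}$ of Section 3.

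Finally, I would pass to profinite completions: since $\widehat{D}=0$ and $F^{\times}$ is finite, $\widehat{\mathrm{Ker}(\partial)}\cong F^{\times}$, and the five lemma applied to the completed diagram gives $\widehat{SK_{1}^{log}(R)}\cong\mathrm{Gal}(K_{R,t}^{ab}/K_{R})$. I expect the main obstacle to be the reciprocity law of the second step, which carries the real content and must be pushed through uniformly over all height one primes; a secondary delicate point is the injectivity assertion, since the divisible subgroup $D$ cannot inject into a profinite group, so the clean statement is that the reciprocity map is injective on $SK_{1}^{log}(R)/D$, and the diagram chase must confirm that its kernel is no larger than $D$.
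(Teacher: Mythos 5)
Your proposal follows essentially the same route as the paper: the reciprocity map is assembled from Saito's unramified theory at the primes in $P$ and from the tame reciprocity maps of Section 3 at $(x)$ and $(y)$, the relation $xy=\pi$ together with the identification $\sigma\mapsto\sigma^{-1}$ supplies the cancellation needed for the reciprocity law, and the isomorphism on profinite completions is deduced by comparing the exact sequences of Proposition 2.1 and Theorem 4.1. Your closing worry about injectivity is well founded and is in fact a gap in the paper itself: the paper's proof establishes only the existence of $\psi^{log}$ and the statement about profinite completions, never the claimed injectivity, and injectivity is in tension with Proposition 2.1, since a nonzero divisible subgroup $D\subset \mathrm{Ker}(\partial)$ admits no injection into the profinite group $Gal(K_{R,t}^{ab}/K_{R})$; so the clean assertion is, as you say, injectivity of the induced map on $SK_{1}^{log}(R)/D$ unless one separately proves $D=0$.
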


\begin{proof}
For $x$ and $y$, we apply the reciprocity maps defined in the previous section. On the other hand, because of the relation $xy=p$, we must take care the following relation:

$$
\begin{array}{ccc}
Gal(\hat{K}_{R,x}(x^{\frac{1}{n}})/\hat{K}_{R,x}) & \stackrel{inverse}{\simeq} & Gal(\hat{K}_{R,y}(y^{\frac{1}{n}})/\hat{K}_{R,y}) \subset Gal(K_{R,t}^{ab}/K_{R}) \\
\rotatebox{90}{$\in$} & & \rotatebox{90}{$\in$} \\
\sigma & \longmapsto & \sigma^{-1}
\end{array}
$$

where, $\hat{K}_{R,x}$ (resp.$\hat{K}_{R,y}$) is the fractional field of a completion ring of $R$ at $(x)$ (resp.$(y)$). This fact and Theorem (0.5) of \cite{Saito} mean that any element of $K_{2}^{M}K_{R}$ is $0$ in $Gal(K_{R,t}^{ab}/K_{R})$ through the composite of the following 2 maps:

\begin{equation}
K_{2}^{M}K_{R} \to (\bigoplus_{\mathfrak{p}\in P} k(\mathfrak{p})^{\times}) \oplus (K_{2}^{M}\hat{K}_{R,x}/U^{1}K_{2}^{M}\hat{K}_{R,x}) \oplus (K_{2}^{M}\hat{K}_{R,y}/U^{1}K_{2}^{M}\hat{K}_{R,y})
\end{equation}

\begin{equation}
\bigoplus_{\mathfrak{p}\in P} Gal(k(\mathfrak{p})^{ab}/k(\mathfrak{p})) \oplus Gal(\hat{K}_{R,x,t}^{ab}/\hat{K}_{R,x}) \oplus Gal(\hat{K}_{R,y,t}^{ab}/\hat{K}_{R,y}) \to Gal(K_{R,t}^{ab}/K_{R}) 
\end{equation}
where $\hat{K}_{R,x,t}^{ab}$ (resp.$\hat{K}_{R,y,t}^{ab}$) is the maximal tamely ramified abelian extension field of $\hat{K}_{R,x}$ (resp.$\hat{K}_{R,y}$), and the map of (4.2) is the composite of natural maps of Galois groups. Therefore, it induces the following map and the profinite completion of $SK_{1}^{log}(R)$ is isomorphic to $Gal(K_{R,t}^{ab}/K_{R})$ because of Proposition 2.1.
\begin{equation}
\psi^{log}: SK_{1}^{log}(R)\rightarrow Gal(K_{R,t}^{ab}/K_{R}).
\end{equation}
\end{proof}

Note that the following sequence is exact and $F$ is the residue field of $K$.
\[
\begin{CD}
0 @>>> F^{\times}  @>>>  \hat{SK_{1}^{log}(R)}  @>>>  \hat{\mathbb{Z}}  @>>>  0. \\
\end{CD}
\]




\begin{thebibliography}{3}

\bibitem{Bloch} Spencer Bloch, $K_{2}$ and algebraic cycles, Ann. of Math. 99 (1974), 349-379.
\bibitem{Kato1} Kazuya Kato, Logarithmic structures of Fontaine-Illusie,  Algebraic analysis, geometry, and number theory (J.-I. Igusa, ed.), Johns Hopkins University Press, Baltimore, 1989, 191–224.
\bibitem{Kato2} Kazuya Kato, K. Kato, A generalization of local class field theory by using K-groups I, J. Fac. Sci. Univ. of Tokyo, Sec. IA, 26 (1979), 303-376.
\bibitem{Kato3} Kazuya Kato, K. Kato, A generalization of local class field theory by using K-groups II, J. Fac. Sci. Univ. of Tokyo, Sec. IA, 27 (1980), 602-683.
\bibitem{Grothendieck-Murre} Grothendieck, Alexander and Jacob P. Murre, The Tame Fundamental Group of a Formal Neighbourhood of a Divisor with Normal Crossings on a Scheme, Lecture Notes in Mathematics. 208, Springer Heidelberg 1971. 
\bibitem{Saito} Shuji Saito, Class Field Theory for Two Dimensional Local Rings, Advanced Studies in Pure Mathematics 12, 1987 Galois Representations and Arithmetic Algebraic Geometry pp. 343-373.
\bibitem{Serre} Jean-Pierre Serre, Local Fields, Graduate Texts in Mathematics 67, Springer- Verlag New York Inc. (1979).
\bibitem{Quillen} D. Quillen, Higher algebraic K-theory I, Lecture Notes in Math. 341, Springer- Verlag (1973), 85-147.


\end{thebibliography}
\end{document}